\theoremstyle{plain}
  \declaretheorem[numberwithin=section]{theorem}
  \declaretheorem[numberlike=theorem]{corollary}
  \declaretheorem[numberlike=theorem]{proposition}
  \declaretheorem[numberlike=theorem]{question}
\theoremstyle{definition}
  \declaretheorem[numberlike=theorem]{example}
  \declaretheorem[numberlike=theorem]{remark}
\newcommand{\tmverbatim}[1]{\text{{\ttfamily{#1}}}}
\begin{document}

\title{
  Partial Lucas-type congruences

  {\small{(Dedicated to George Andrews and Bruce~Berndt for their 85\textsuperscript{th}~birthdays)}}
}

\author{Armin Straub\thanks{\textit{Email:} \texttt{straub@southalabama.edu}}\\
Department of Mathematics and Statistics\\
University of South Alabama}

\date{November 2, 2025}

\maketitle

\begin{abstract}
  In their study of a binomial sum related to Wolstenholme's theorem,
  Chamberland and Dilcher prove that the corresponding sequence modulo primes
  $p$ satisfies congruences that are analogous to Lucas' theorem for the
  binomial coefficients with the notable twist that there is a restriction on
  the $p$-adic digits. We prove a general result that shows that similar
  partial Lucas congruences are satisfied by all sequences representable as
  the constant terms of the powers of a multivariate Laurent polynomial.
\end{abstract}

\section{Introduction}

Chamberland and Dilcher \cite{cd-binomial06} study divisibility properties
of the sequences $(u_{a, b}^{\varepsilon} (n))_{n \geq 0}$ defined by the
binomial sums
\begin{equation}
  u_{a, b}^{\varepsilon} (n) = \sum_{k = 0}^n (- 1)^{\varepsilon k}
  \binom{n}{k}^a \binom{2 n}{k}^b \label{eq:uabe:def}
\end{equation}
and connect these sums to Wolstenholme's theorem. In the follow-up study
\cite{cd-binomial09}, they further investigate the special case
$(\varepsilon, a, b) = (1, 1, 1)$
\begin{equation}
  u (n) = \sum_{k = 0}^n (- 1)^k \binom{n}{k} \binom{2 n}{k}, \label{eq:u:def}
\end{equation}
which is sequence \tmverbatim{A234839} in the OEIS \cite{oeis}. One of the
interesting arithmetic properties that Chamberland and Dilcher
\cite{cd-binomial09} prove is the following set of congruences which, as we
recall below, can be viewed as partial Lucas congruences.

\begin{theorem}[{\cite[Theorem~2.2]{cd-binomial09}}]
  \label{thm:u:lucas}Let $p \geq 3$ be a prime. Then
  \begin{equation*}
    u (p n + k) \equiv u (n) u (k) \pmod{p}
  \end{equation*}
  for all integers $n, k \geq 0$ with $k \leq (p - 1) / 2$.
\end{theorem}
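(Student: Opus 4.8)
The plan is to realize $u(n)$ as the constant term of the $n$th power of a single Laurent polynomial and then to run the Frobenius-and-exponent-bookkeeping argument that is characteristic of Lucas-type congruences. First I would convert the binomial sum to a constant term: writing $\binom{n}{k}=[x^k](1+x)^n$ and summing the binomial series $\sum_k\binom{2n}{k}(-1)^k x^{-k}=(1-1/x)^{2n}$ over $k$, one obtains $u(n)=\operatorname{CT}_x[P(x)^n]$, where $\operatorname{CT}_x$ extracts the constant term in $x$ and
\[
  P(x)=(1+x)\Bigl(1-\tfrac1x\Bigr)^{2}=x-1-x^{-1}+x^{-2}.
\]
A quick check against $u(0)=1$, $u(1)=-1$, $u(2)=-1$ confirms the normalization. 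The only feature of $P$ the argument will use is that its support lies in the exponent range $[-2,1]$.

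Next I would reduce modulo $p$. Since $P$ has integer coefficients, the Frobenius relation $P(x)^p\equiv P(x^p)\pmod p$ gives $P(x)^{pn}\equiv P(x^p)^n$, whence
\[
  u(pn+k)=\operatorname{CT}_x\!\left[P(x)^{pn}P(x)^k\right]\equiv\operatorname{CT}_x\!\left[P(x^p)^n\,P(x)^k\right]\pmod p.
\]
Expanding $P(x^p)^n=\sum_i\bigl([x^i]P^n\bigr)x^{pi}$ and matching exponents against the support of $P^k$, the constant term collects exactly the contributions with $pi+j=0$, so that
\[
  u(pn+k)\equiv\sum_i\bigl([x^i]P^n\bigr)\bigl([x^{-pi}]P^k\bigr)\pmod p.
\]

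Finally I would bound exponents to isolate a single summand. Because $P$ is supported on $[-2,1]$, the power $P^k$ is supported on $[-2k,k]$, so $[x^{-pi}]P^k$ can be nonzero only when $-2k\le -pi\le k$. For $k\le(p-1)/2$ we have $2k\le p-1<p$, which forces $i=0$: the choice $i=1$ would need $p\le 2k$ and $i=-1$ would need $p\le k$, both impossible. Hence only the diagonal term survives and
\[
  u(pn+k)\equiv\bigl([x^0]P^n\bigr)\bigl([x^0]P^k\bigr)=u(n)\,u(k)\pmod p,
\]
as claimed. I expect the main obstacle to be the first step, namely producing a constant-term representation with small enough support: it is precisely the asymmetric spread of $P$---width $2$ on the negative side against $1$ on the positive side---that yields the sharp cutoff $k\le(p-1)/2$, the binding constraint being $2k<p$. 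A representation with wider support would only give the congruence for a smaller range of $k$, so some care in choosing $P$ is essential.
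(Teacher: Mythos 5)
Your proof is correct and takes essentially the same approach as the paper: the paper likewise converts $u(n)$ into a constant term via the Egorychev-style manipulation (using the mirror-image polynomial $(1+x)^2(1-\tfrac1x)$ with support in $[-1,2]$ instead of your $(1+x)(1-\tfrac1x)^2$ with support in $[-2,1]$), applies $P(x)^{pn}\equiv P(x^p)^n \pmod p$, and then shows by exponent bookkeeping that only the constant term of $P(x)^k$ survives when $2k<p$. The only organizational difference is that the paper packages this last step as a general theorem about integral interior points of the scaled Newton polytope $\frac1M\operatorname{Newt}(P)$ (applied here with $M=2$), whereas you carry out the same support argument directly for the one-variable polynomial.
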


Let $p$, here and throughout, denote a prime, while $n$ and $k$ are used to
denote nonnegative integers. It follows immediately, as observed in
\cite[Corollary~2.1]{cd-binomial09}, that, if $n = n_0 + n_1 p + \cdots +
n_r p^r$ is the $p$-adic expansion of $n$, then
\begin{equation}
  u (n) \equiv u (n_0) u (n_1) \cdots u (n_r) \pmod{p},
  \label{eq:lucas:u}
\end{equation}
provided that $0 \leq n_j < p / 2$ for all $j = 0, 1, \ldots, r$. These
congruences are reminiscent of the classical congruences
\begin{equation}
  \binom{n}{k} \equiv \binom{n_0}{k_0} \binom{n_1}{k_1} \cdots
  \binom{n_r}{k_r} \pmod{p} \label{eq:binom:lucas}
\end{equation}
for the binomial coefficients due to Lucas \cite{lucas78} where $n = n_0 +
n_1 p + \cdots + n_r p^r$ and $k = k_0 + k_1 p + \cdots + k_r p^r$ with $0
\leq n_j, k_j < p$.

For the sequence $(u (n))_{n \geq 0}$ from \eqref{eq:u:def}, the
congruences \eqref{eq:lucas:u} are restricted to digits $n_j$ less than $p /
2$. On the other hand, Gessel showed \cite[Theorem 1]{gessel-super} that the
Ap\'ery numbers
\begin{equation}
  A (n) = \sum_{k = 0}^n \binom{n}{k}^2 \binom{n + k}{k}^2 \label{eq:apery3}
\end{equation}
in place of $u (n)$ satisfy the congruences \eqref{eq:lucas:u} for all digits
$0 \leq n_j < p$ (see also Example~\ref{eg:apery3}). Accordingly, we say
that the Ap\'ery numbers satisfy \emph{Lucas congruences} modulo all
primes, and the same has since been shown to be true for many other (families
of) sequences \cite{mcintosh-lucas}, \cite{granville-bin97},
\cite{sd-laurent09}, \cite{ry-diag13}, \cite{ms-lucascongruences},
\cite{delaygue-apery}, \cite{abd-lucas}, \cite{gorodetsky-ct},
\cite{hs-lucas-x}. A historical survey of Lucas congruences can be found in
\cite{mestrovic-lucas}. On the other hand, the sequence $(u (n))_{n
\geq 0}$ satisfies the Lucas congruences only partially.

Chamberland and Dilcher \cite{cd-binomial09} proved
Theorem~\ref{thm:u:lucas} directly from the binomial sum \eqref{eq:u:def}
using the Lucas congruences \eqref{eq:binom:lucas} for the binomial
coefficients. We show that Theorem~\ref{thm:u:lucas}, as well as similar
results, can be uniformly proved for a large class of sequences representable
as \emph{constant terms}. Here, we say that a sequence $(c (n))_{n \geq
0}$ is a constant term if it can be expressed as
\begin{equation*}
  c (n) = \operatorname{ct} [P (\boldsymbol{x})^n Q (\boldsymbol{x})]
\end{equation*}
for Laurent polynomials $P (\boldsymbol{x}), Q (\boldsymbol{x}) \in \mathbb{Z}
[\boldsymbol{x}^{\pm 1}]$ in several variables $\boldsymbol{x}= (x_1, x_2, \ldots,
x_d)$ with integer coefficients. Throughout, we use the notation $\operatorname{ct} [F
(\boldsymbol{x})]$ for the constant term of a Laurent polynomial $F
(\boldsymbol{x})$ in the variables $\boldsymbol{x}$. As further detailed in
Section~\ref{sec:lucas:partial}, we denote with $\operatorname{Newt} (P)$ the Newton
polytope of $P$. Our main result shows that all constant term sequences with
$Q = 1$ satisfy partial Lucas congruences.

\begin{theorem}
  \label{thm:ct:lucas:partial:intro}Let $P (\boldsymbol{x}) \in \mathbb{Z}
  [\boldsymbol{x}^{\pm 1}]$ and choose $M \geq 1$ large enough so that
  $\frac{1}{M} \operatorname{Newt} (P)$ contains no integral (relative) interior point
  besides the origin. Then, for all primes $p$, the sequence defined by $A (n)
  = \operatorname{ct} [P (\boldsymbol{x})^n]$ satisfies
  \begin{equation*}
    A (p n + k) \equiv A (n) A (k) \pmod{p}
  \end{equation*}
  for all integers $n, k \geq 0$ with $k < \frac{p}{M}$.
\end{theorem}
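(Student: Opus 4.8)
The plan is to combine the Frobenius endomorphism over $\mathbb{F}_p$ with a support (Newton polytope) localization of the constant term. Write $P(\boldsymbol{x})^m = \sum_{\boldsymbol{v} \in \mathbb{Z}^d} a_{\boldsymbol{v}}^{(m)} \boldsymbol{x}^{\boldsymbol{v}}$, so that $A(m) = a_{\boldsymbol{0}}^{(m)}$. First I would invoke the ``freshman's dream'' $P(\boldsymbol{x})^p \equiv P(\boldsymbol{x}^p) \pmod{p}$, valid coefficientwise for any $P \in \mathbb{Z}[\boldsymbol{x}^{\pm 1}]$ since raising to the $p$-th power is a ring homomorphism modulo $p$ and $p \mid \binom{p}{j}$ for $0 < j < p$. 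Raising this to the $n$-th power and multiplying by $P(\boldsymbol{x})^k$ gives
\begin{equation*}
  P(\boldsymbol{x})^{pn+k} \equiv P(\boldsymbol{x}^p)^n \, P(\boldsymbol{x})^k \pmod{p}.
\end{equation*}
Extracting the constant term of the right-hand side and pairing the monomial $\boldsymbol{x}^{p\boldsymbol{v}}$ from $P(\boldsymbol{x}^p)^n$ with $\boldsymbol{x}^{-p\boldsymbol{v}}$ from $P(\boldsymbol{x})^k$ yields
\begin{equation*}
  A(pn+k) \equiv \sum_{\boldsymbol{v} \in \mathbb{Z}^d} a_{\boldsymbol{v}}^{(n)} \, a_{-p\boldsymbol{v}}^{(k)} \pmod{p}.
\end{equation*}
The term $\boldsymbol{v} = \boldsymbol{0}$ contributes exactly $a_{\boldsymbol{0}}^{(n)} a_{\boldsymbol{0}}^{(k)} = A(n) A(k)$, so it remains to show that every other term vanishes.

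The decisive input is that $\operatorname{supp}(P^k) \subseteq k \operatorname{Newt}(P)$, because the support of a product lies in the Minkowski sum of the supports. Hence $a_{-p\boldsymbol{v}}^{(k)} \neq 0$ forces $-p\boldsymbol{v} \in k\operatorname{Newt}(P)$, i.e. $-\boldsymbol{v} \in \tfrac{k}{p}\operatorname{Newt}(P)$. Now I would use the hypothesis $k < p/M$, equivalently $\tfrac{kM}{p} < 1$, together with the fact (read off from the phrasing of the hypothesis) that the origin is a relative interior lattice point of $\operatorname{Newt}(P)$. Writing $-\boldsymbol{v} = \tfrac{k}{p}\boldsymbol{\kappa}$ with $\boldsymbol{\kappa} \in \operatorname{Newt}(P)$, the point $-M\boldsymbol{v} = \tfrac{kM}{p}\boldsymbol{\kappa} + \bigl(1 - \tfrac{kM}{p}\bigr)\boldsymbol{0}$ is a convex combination of $\boldsymbol{\kappa} \in \operatorname{Newt}(P)$ with the relative interior point $\boldsymbol{0}$, and since the weight $1 - \tfrac{kM}{p}$ on the origin is strictly positive, the segment principle places $-M\boldsymbol{v}$ in $\operatorname{relint}(\operatorname{Newt}(P))$. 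Equivalently, the integer point $-\boldsymbol{v}$ lies in $\operatorname{relint}\bigl(\tfrac{1}{M}\operatorname{Newt}(P)\bigr)$.

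By hypothesis the only integral relative interior point of $\tfrac{1}{M}\operatorname{Newt}(P)$ is the origin, which forces $\boldsymbol{v} = \boldsymbol{0}$ and collapses the sum to the single surviving term $A(n)A(k)$, proving the congruence. I expect the convex geometry in the middle step to be the main obstacle and the part demanding the most care: one must track the \emph{relative} interior rather than the topological interior, so that the argument still applies when $\operatorname{Newt}(P)$ is not full-dimensional; one must verify that scaling a polytope containing the origin in its relative interior by a factor strictly less than $1$ lands inside the relative interior (the segment principle); and one must confirm that the strict inequality $k < p/M$ is exactly what prevents a point of $\tfrac{k}{p}\operatorname{Newt}(P)$ from reaching a non-origin lattice point on the boundary of $\tfrac{1}{M}\operatorname{Newt}(P)$. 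The remaining ingredients, namely the Frobenius congruence and the constant-term bookkeeping, are routine.
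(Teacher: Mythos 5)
Your proof follows the paper's own route almost step for step: the Frobenius reduction is identical, your coefficient pairing $\sum_{\boldsymbol{v}} a^{(n)}_{\boldsymbol{v}} a^{(k)}_{-p\boldsymbol{v}}$ is precisely the paper's $\operatorname{ct} [P (\boldsymbol{x})^n \Lambda_p [P (\boldsymbol{x})^k]]$ written without naming the Cartier operator $\Lambda_p$, and the support bound $\operatorname{supp} (P^k) \subseteq k \operatorname{Newt} (P)$ is the same. The genuine deviation is the step you flag yourself: you take as given that $\boldsymbol{0}$ is a \emph{relative interior} point of $\operatorname{Newt} (P)$, ``read off from the phrasing of the hypothesis'', and then apply the segment principle. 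The hypothesis asserts no such thing: it only forbids \emph{nonzero} integral points in the relative interior of $\frac{1}{M} \operatorname{Newt} (P)$, and it is satisfied (vacuously) by polytopes with no integral relative interior points at all, including ones where $\boldsymbol{0}$ lies on the relative boundary or even outside $\operatorname{Newt} (P)$; in those cases your segment-principle step has nothing to stand on. The paper proceeds differently at exactly this point: it invokes Proposition~\ref{prop:ct:newt0} to dispose of the case $\boldsymbol{0} \notin \operatorname{Newt} (P)$ (then $A (n) = 0$ for all $n \geq 1$ and the congruences are trivial), and it never uses the segment principle -- instead it uses the description, fixed just before the theorem, of the ``(relative) interior points'' of $\frac{1}{M} \operatorname{Newt} (P)$ as the points $\sum_{\boldsymbol{v} \in \operatorname{supp} (P)} \lambda_{\boldsymbol{v}} \boldsymbol{v}$ with $\lambda_{\boldsymbol{v}} \geq 0$ and $\sum_{\boldsymbol{v}} \lambda_{\boldsymbol{v}} < \frac{1}{M}$, a set that contains the exponent vector produced by the Cartier step by construction, wherever the origin sits.

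This distinction is not pedantic, because under the standard meaning of relative interior (with the vacuous reading allowed) the statement you are proving is actually false, so the assumption you ``read off'' is carrying real logical weight. Take $P (x, y) = x^{- 2} + x^{- 1} + 1 + x + x^2 + x y$. Then $\operatorname{Newt} (P)$ is the triangle with vertices $(- 2, 0)$, $(2, 0)$, $(1, 1)$; every interior point has $0 < y < 1$, so there are no integral interior points whatsoever and the hypothesis holds vacuously with $M = 1$, while $\boldsymbol{0}$ sits on the boundary edge. Since the $x y$-term can never be cancelled in a constant term, $A (n) = \operatorname{ct} [(x^{- 2} + x^{- 1} + 1 + x + x^2)^n]$, giving $A (1) = 1$, $A (2) = 5$, $A (5) = 381 \equiv 0 \pmod{3}$, whereas $A (1) A (2) \equiv 2 \pmod{3}$: the congruence fails for $p = 3$, $n = 1$, $k = 2 < p / M$. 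So the theorem is correct only under the paper's operative reading of ``interior point'' (equivalently: no nonzero lattice point in $\bigcup_{0 \leq t < 1 / M} t \operatorname{Newt} (P)$), which is strictly stronger than the vacuous reading and strictly weaker than your assumption $\boldsymbol{0} \in \operatorname{relint} (\operatorname{Newt} (P))$ (for example, $P = 1 + x$ satisfies the former but not the latter). Your argument is sound whenever $\boldsymbol{0} \in \operatorname{relint} (\operatorname{Newt} (P))$ -- which covers all the paper's examples -- but as written it neither covers the full scope of the theorem nor justifies the reading on which it silently relies.
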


We note that the special case $M = 1$ of
Theorem~\ref{thm:ct:lucas:partial:intro}, which proves that certain constant
terms satisfy the (full) Lucas congruences modulo all primes, also follows
from a result of Samol and van Straten \cite[Theorem~4.3]{sd-laurent09} (see
also \cite{mv-laurent13} and \cite{hs-lucas-x}). We prove
Theorem~\ref{thm:ct:lucas:partial:intro} in Section~\ref{sec:lucas:partial}
and speculate about a potential converse in Question~\ref{q:ct:lucas}. As a
first example, we observe here that it implies Theorem~\ref{thm:u:lucas} as a
special case.

\begin{example}
  \label{eg:u}We will show below that the sequence $(u (n))_{n \geq 0}$
  defined in \eqref{eq:u:def} has the constant term representation
  \begin{equation}
    u (n) = \operatorname{ct} \left[ \left((1 + x) \left(x - \frac{1}{x} \right)
    \right)^n \right] . \label{eq:u:ct}
  \end{equation}
  Clearly, the Newton polytope of $P (x) = (1 + x) \left(x - \frac{1}{x}
  \right) = x^2 + x - 1 - \frac{1}{x}$ is the interval $\operatorname{Newt} (P) = [-
  1, 2]$. In particular, $\frac{1}{2} \operatorname{Newt} (P) = \left[ - \frac{1}{2},
  1 \right]$ contains no integral interior point besides $0$. Therefore,
  Theorem~\ref{thm:ct:lucas:partial:intro} applies with $M = 2$ to show that
  Theorem~\ref{thm:u:lucas} indeed holds for all primes (note that
  Theorem~\ref{thm:u:lucas} holds rather vacuously for $p = 2$).
  
  In principle, once found, a constant term representation such as
  \eqref{eq:u:ct} can be algorithmically proven using creative telescoping
  \cite{koutschan-phd} (see, for instance, \cite{gorodetsky-ct} for
  worked-out examples). In general, however, it is a difficult problem to find
  constant term representations (or even to decide whether such a
  representation exists; see \cite{bsy-constantterms}). Yet, certain
  binomial sums can be systematically translated into constant term
  representations. This is explained by Rowland and Zeilberger in
  \cite{rz-cong} who attribute the approach to Egorychev. In the present
  case we readily obtain
  \begin{eqnarray*}
    u (n) & = & \sum_{k = 0}^n (- 1)^k \binom{n}{k} \binom{2 n}{k}\\
    & = & \operatorname{ct} \left[ \sum_{k = 0}^n (- 1)^k \binom{n}{k} \frac{(1 +
    x)^{2 n}}{x^k} \right]\\
    & = & \operatorname{ct} \left[ (1 + x)^{2 n} \left(1 - \frac{1}{x} \right)^n
    \right],
  \end{eqnarray*}
  which is equivalent to the claimed representation.
\end{example}

We illustrate the versatility of Theorem~\ref{thm:ct:lucas:partial:intro} by
showing that Theorem~\ref{thm:u:lucas} for the sequence defined by $u (n) =
u_{1, 1}^1 (n)$ actually continues to hold for the more general sequence
$(u_{a, b}^{\varepsilon} (n))_{n \geq 0}$ defined in \eqref{eq:uabe:def}.

\begin{corollary}
  For any prime $p$ and any integers $\varepsilon$, $a \geq 1$, $b
  \geq 0$ we have
  \begin{equation*}
    u_{a, b}^{\varepsilon} (p n + k) \equiv u_{a, b}^{\varepsilon} (n) u_{a,
     b}^{\varepsilon} (k) \pmod{p}
  \end{equation*}
  for all integers $n, k \geq 0$ with $k < p / 2$.
\end{corollary}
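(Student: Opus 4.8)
The plan is to realize $(u_{a,b}^{\varepsilon}(n))_{n\ge 0}$ as a constant term sequence $\operatorname{ct}[P(\boldsymbol{x})^n]$ and then apply Theorem~\ref{thm:ct:lucas:partial:intro} with $M=2$. To obtain the representation I follow the Egorychev approach used in Example~\ref{eg:u}. Since $a\ge 1$, I keep one factor $\binom{n}{k}$ aside to carry out the summation over $k$ by the binomial theorem, and I replace the remaining $a-1$ factors $\binom{n}{k}=\operatorname{ct}_{x_i}[(1+x_i)^n/x_i^k]$ and the $b$ factors $\binom{2n}{k}=\operatorname{ct}_{y_j}[(1+y_j)^{2n}/y_j^k]$ by constant terms in new variables $x_1,\dots,x_{a-1}$ and $y_1,\dots,y_b$. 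Writing $t=x_1\cdots x_{a-1}y_1\cdots y_b$ and summing $\sum_k(-1)^{\varepsilon k}\binom{n}{k}t^{-k}=(1+(-1)^{\varepsilon}t^{-1})^n$, this yields $u_{a,b}^{\varepsilon}(n)=\operatorname{ct}[P^n]$ with
\[
  P=\prod_{i=1}^{a-1}(1+x_i)\,\prod_{j=1}^{b}(1+y_j)^2\,\Bigl(1+\frac{(-1)^{\varepsilon}}{x_1\cdots x_{a-1}y_1\cdots y_b}\Bigr)\in\mathbb{Z}[\boldsymbol{x}^{\pm 1}].
\]
For $(\varepsilon,a,b)=(1,1,1)$ this reduces to $P=(1+y_1)^2(1-1/y_1)$, recovering \eqref{eq:u:ct}.

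Next I compute the Newton polytope. Since the Newton polytope of a product is the Minkowski sum of the factors' Newton polytopes, and the final factor contributes the segment from the origin to $-\mathbf{1}=(-1,\dots,-1)$, I obtain
\[
  \operatorname{Newt}(P)=\bigl([0,1]^{a-1}\times[0,2]^b\bigr)+\operatorname{conv}\{\mathbf{0},-\mathbf{1}\},
\]
a full-dimensional polytope in $\mathbb{R}^{a+b-1}$. It then remains to verify the hypothesis of Theorem~\ref{thm:ct:lucas:partial:intro} for $M=2$, namely that $\tfrac12\operatorname{Newt}(P)$ contains no integral relative interior point other than the origin.

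For this I first bound the coordinates: a point of $\operatorname{Newt}(P)$ has $x_i$-coordinates in $[-1,1]$ and $y_j$-coordinates in $[-1,2]$, so a point of $\tfrac12\operatorname{Newt}(P)$ has $x_i$-coordinates in $[-\tfrac12,\tfrac12]$ and $y_j$-coordinates in $[-\tfrac12,1]$. Hence the only integral points of $\tfrac12\operatorname{Newt}(P)$ are those of the form $(\mathbf{0},\eta)$ with $\eta\in\{0,1\}^b$. If some coordinate $\eta_{j_0}=1$, then this point attains the maximal value $1$ of the $y_{j_0}$-coordinate over $\tfrac12\operatorname{Newt}(P)$ (the highest power of $y_{j_0}$ occurring in $P$ is $y_{j_0}^2$), so it lies on a proper face and cannot be in the relative interior. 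Thus only the origin can be an integral relative interior point, Theorem~\ref{thm:ct:lucas:partial:intro} applies with $M=2$, and it gives $u_{a,b}^{\varepsilon}(pn+k)\equiv u_{a,b}^{\varepsilon}(n)\,u_{a,b}^{\varepsilon}(k)\pmod p$ for all $k<p/2$. The one delicate step, isolated above, is this boundary argument ruling out the candidates $(\mathbf{0},\eta)$: it is precisely the degree-$2$ factors $(1+y_j)^2$ that place these points on the boundary rather than the interior, which is what forces the digit restriction $k<p/2$ (equivalently $M=2$) instead of the full Lucas congruences, and which matches the $M=2$ already found in Example~\ref{eg:u}.
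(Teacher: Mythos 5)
Your proof is correct and takes essentially the same route as the paper: you derive the identical constant-term representation $u_{a,b}^{\varepsilon}(n)=\operatorname{ct}[P(\boldsymbol{x})^n]$ by the same Egorychev-style summation and then apply Theorem~\ref{thm:ct:lucas:partial:intro} with $M=2$. The only difference is in how the polytope hypothesis is checked: the paper just notes $\operatorname{Newt}(P)\subseteq[-1,2]^{a+b-1}$, which with its description of interior points (coefficient sum $<\tfrac12$ forces every coordinate into $(-\tfrac12,1)$) already leaves the origin as the only integral candidate, whereas you compute $\operatorname{Newt}(P)$ exactly as a Minkowski sum and eliminate the lattice points $(\boldsymbol{0},\eta)$ by a supporting-hyperplane argument --- both verifications are valid.
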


\begin{proof}
  We can readily express $u_{a, b}^{\varepsilon} (n)$ as a constant term as
  follows:
  \begin{eqnarray}
    u_{a, b}^{\varepsilon} (n) & = & \sum_{k = 0}^n (- 1)^{\varepsilon k}
    \binom{n}{k}^a \binom{2 n}{k}^b \nonumber\\
    & = & \operatorname{ct} \left[ \sum_{k = 0}^n (- 1)^{\varepsilon k} \binom{n}{k}
    \prod_{i = 1}^{a - 1} \frac{(1 + x_i)^n}{x_i^k} \prod_{j = a}^{a + b - 1}
    \frac{(1 + x_j)^{2 n}}{x_j^k} \right] = \operatorname{ct} [P (\boldsymbol{x})^n] 
    \label{eq:uabe:ct}
  \end{eqnarray}
  with the Laurent polynomial
  \begin{equation*}
    P (\boldsymbol{x}) = \left(1 + \frac{(- 1)^{\varepsilon}}{x_1 \cdots x_{a
     + b - 1}} \right) \prod_{i = 1}^{a - 1} (1 + x_i) \prod_{j = a}^{a + b -
     1} (1 + x_j)^2 .
  \end{equation*}
  Note that $\operatorname{Newt} (P) \subseteq [- 1, 2]^{a + b - 1}$ so that we can
  apply Theorem~\ref{thm:ct:lucas:partial:intro} with $M = 2$ to arrive at the
  claimed congruences.
\end{proof}

If desired, further generalizations beyond the sequence $(u_{a,
b}^{\varepsilon} (n))_{n \geq 0}$ can be given along the same lines (for
instance, one can replace $(- 1)^{\varepsilon}$ by any integer $r$ and one can
insert additional powers of suitable binomial coefficients such as $\binom{n +
k}{k}$ into the summation).

\section{Partial Lucas congruences}\label{sec:lucas:partial}

When working with several variables $\boldsymbol{x}= (x_1, x_2, \ldots, x_d)$,
we use common shorthand notation to write, for instance,
$\boldsymbol{x}^{\boldsymbol{v}} = x_1^{v_1} x_2^{v_2} \cdots x_d^{v_d}$ where the
exponents are specified in the vector $\boldsymbol{v}= (v_1, v_2, \ldots, v_d)$.
Given a Laurent polynomial
\begin{equation*}
  P (\boldsymbol{x}) = \sum_{\boldsymbol{v} \in \mathbb{Z}^d} c_{\boldsymbol{v}}
   \boldsymbol{x}^{\boldsymbol{v}} \in R [\boldsymbol{x}^{\pm 1}]
\end{equation*}
over some ring $R$ of characteristic $0$, its support is
\begin{equation*}
  \operatorname{supp} (P) = \left\{ \boldsymbol{v} \in \mathbb{Z}^d \, :
   \, c_{\boldsymbol{v}} \neq 0 \right\} .
\end{equation*}
For subsets $S, T$ of a vector space and scalars $\lambda$, we use the typical
notations $\lambda S = \left\{ \lambda \boldsymbol{v} \, :
\, \boldsymbol{v} \in S \right\}$ and $S + T = \left\{
\boldsymbol{v}+\boldsymbol{w} \, : \, \boldsymbol{v} \in S,
\boldsymbol{w} \in T \right\}$, the latter being the Minkowski sum. For
instance, we note that $\operatorname{supp} (P Q) \subseteq \operatorname{supp} (P) +
\operatorname{supp} (Q)$ for Laurent polynomials $P, Q$.

Some care has to be applied since $2 S \subseteq S + S$ while, in general, $2
S \neq S + S$. Such issues disappear if we work with convex sets. Indeed, for
all convex sets $S$ we do have $2 S = S + S$ (to see this, take $\boldsymbol{v},
\boldsymbol{w} \in S$ and note that $\boldsymbol{v}+\boldsymbol{w}= 2 \left(\frac{1}{2} \boldsymbol{v}+ \frac{1}{2} \boldsymbol{w} \right) \in 2 S$ because
convexity of $S$ implies that $\frac{1}{2} \boldsymbol{v}+ \frac{1}{2}
\boldsymbol{w} \in S$). For further background, we refer to
\cite{schneider-convex}. The Newton polytope of $P$, denoted as $\operatorname{Newt}
(P)$, is the convex hull of $\operatorname{supp} (P)$. In other words,
\begin{equation}
  \operatorname{Newt} (P) = \left\{ \sum_{\boldsymbol{v} \in \operatorname{supp} (P)}
    \lambda_{\boldsymbol{v}} \boldsymbol{v} \, : \,
    \lambda_{\boldsymbol{v}} \geq 0, \, \sum_{\boldsymbol{v} \in
  \operatorname{supp} (P)} \lambda_{\boldsymbol{v}} = 1 \right\} . \label{eq:newt:def}
\end{equation}
It is a well-known basic property of Newton polytopes that
\begin{equation}
  \operatorname{Newt} (P Q) = \operatorname{Newt} (P) + \operatorname{Newt} (Q) \label{eq:newt:prod}
\end{equation}
for all Laurent polynomials $P, Q$.

To make the statement of Theorem~\ref{thm:ct:lucas:partial:intro} more
transparent, we make the simple observation that, when considering nontrivial
sequences defined by $A (n) = \operatorname{ct} [P (\boldsymbol{x})^n]$, the Newton
polytope always contains the origin.

\begin{proposition}
  \label{prop:ct:newt0}Let $A (n) = \operatorname{ct} [P (\boldsymbol{x})^n]$ for $P
  (\boldsymbol{x}) \in \mathbb{C} [\boldsymbol{x}^{\pm 1}]$. If $A (n) \neq 0$ for
  some $n \geq 1$, then $\operatorname{Newt} (P)$ contains the origin.
\end{proposition}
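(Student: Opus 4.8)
The plan is to translate the nonvanishing of $A(n)$ into a statement about the support of $P(\boldsymbol{x})^n$ and then transfer it to the Newton polytope via the product rule \eqref{eq:newt:prod}. Concretely, I would fix an $n \geq 1$ with $A(n) \neq 0$. Writing $P(\boldsymbol{x})^n = \sum_{\boldsymbol{v}} c_{\boldsymbol{v}} \boldsymbol{x}^{\boldsymbol{v}}$, the constant term is exactly the coefficient $c_{\boldsymbol{0}}$ of the monomial $\boldsymbol{x}^{\boldsymbol{0}} = 1$, so $A(n) \neq 0$ says precisely that $\boldsymbol{0} \in \operatorname{supp}(P^n)$. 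Since the Newton polytope of any Laurent polynomial contains its support (being by definition the convex hull of the support, see \eqref{eq:newt:def}), this immediately yields $\boldsymbol{0} \in \operatorname{Newt}(P^n)$.

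The second step is to compute $\operatorname{Newt}(P^n)$ in terms of $\operatorname{Newt}(P)$. Iterating the product rule \eqref{eq:newt:prod} gives $\operatorname{Newt}(P^n) = \operatorname{Newt}(P) + \operatorname{Newt}(P) + \cdots + \operatorname{Newt}(P)$ with $n$ summands, and because $\operatorname{Newt}(P)$ is convex this $n$-fold Minkowski sum coincides with the dilate $n \operatorname{Newt}(P)$ (the same elementary argument showing $2 S = S + S$ for convex $S$, applied inductively). Combining with the previous step, $\boldsymbol{0} \in n \operatorname{Newt}(P)$, and since $n \geq 1$ I may divide by $n$ to conclude $\boldsymbol{0} = \tfrac{1}{n} \boldsymbol{0} \in \operatorname{Newt}(P)$, as claimed.

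There is essentially no hard step here: the proof is a short chain of containments together with the identity $\operatorname{Newt}(P^n) = n \operatorname{Newt}(P)$. The only points needing (minor) care are the two standard facts about Newton polytopes that I invoke, namely that the support is contained in the Newton polytope and that the $n$-fold Minkowski sum of a convex set equals its $n$-fold dilate, both already foreshadowed in the surrounding discussion. I would emphasize that only the direction ``$A(n) \neq 0 \Rightarrow \boldsymbol{0} \in \operatorname{supp}(P^n)$'' is used, so no cancellation subtleties among the monomials of $P^n$ ever enter the argument.
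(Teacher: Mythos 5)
Your proof is correct and follows essentially the same route as the paper: translate $A(n)\neq 0$ into $\boldsymbol{0}\in\operatorname{supp}(P^n)\subseteq\operatorname{Newt}(P^n)$, use the product rule \eqref{eq:newt:prod} together with convexity to get $\operatorname{Newt}(P^n)=n\operatorname{Newt}(P)$, and divide by $n$. The only difference is that you spell out the inductive Minkowski-sum step that the paper leaves implicit.
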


\begin{proof}
  If $A (n) \neq 0$ for $n \geq 1$ then, by definition, the origin is in
  the support of $P (\boldsymbol{x})^n$ so that, in particular, $\boldsymbol{0}
  \in \operatorname{Newt} (P^n)$. Since Newton polytopes are convex by definition, it
  follows from \eqref{eq:newt:prod} that
  \begin{equation*}
    \operatorname{Newt} (P^n) = n \operatorname{Newt} (P) .
  \end{equation*}
  Thus, $\boldsymbol{0} \in n \operatorname{Newt} (P)$ and, since $n \neq 0$, we also
  have $\boldsymbol{0} \in \operatorname{Newt} (P)$.
\end{proof}

Let $\gamma > 0$. If $\boldsymbol{0} \in \operatorname{Newt} (P)$ then, corresponding to
\eqref{eq:newt:def}, the (relative) interior points of $\gamma \operatorname{Newt}
(P)$ are those that can be expressed as $\sum_{\boldsymbol{v} \in \operatorname{supp}
(P)} \lambda_{\boldsymbol{v}} \boldsymbol{v}$ with $\lambda_{\boldsymbol{v}}
\geq 0$ and $\sum_{\boldsymbol{v} \in \operatorname{supp} (P)}
\lambda_{\boldsymbol{v}} < \gamma$.

We are now in a position to prove Theorem~\ref{thm:ct:lucas:partial:intro}
which we restate as Theorem~\ref{thm:ct:lucas:partial} in a slightly more
general form, writing $\mathbb{Z}_p$ for the ring of $p$-adic integers. Our
proof is an extension of the argument in \cite{hs-lucas-x} where the special
case $M = 1$ is proved.

\begin{theorem}
  \label{thm:ct:lucas:partial}Let $P (\boldsymbol{x}) \in \mathbb{Z}_p
  [\boldsymbol{x}^{\pm 1}]$ and choose $M \geq 1$ so that $\frac{1}{M}
  \operatorname{Newt} (P)$ contains no integral (relative) interior point besides the
  origin. Then the sequence defined by $A (n) = \operatorname{ct} [P
  (\boldsymbol{x})^n]$ satisfies
  \begin{equation}
    A (p n + k) \equiv A (n) A (k) \pmod{p}
    \label{eq:ct:lucas:partial}
  \end{equation}
  for all integers $n, k \geq 0$ with $k < \frac{p}{M}$.
\end{theorem}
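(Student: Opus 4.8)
The plan is to reduce everything modulo $p$ and exploit the Frobenius (``freshman's dream'') congruence for Laurent polynomials over $\mathbb{Z}_p$. First I would dispose of a degenerate case. If $\boldsymbol{0} \notin \operatorname{Newt} (P)$, then $\boldsymbol{0} \notin n \operatorname{Newt} (P) = \operatorname{Newt} (P^n)$ for every $n \geq 1$, whence $A (n) = 0$ for all $n \geq 1$ while $A (0) = 1$, and \eqref{eq:ct:lucas:partial} holds trivially (checking the cases $n = 0$, then $k = 0$, then $n, k \geq 1$). So I may assume $\boldsymbol{0} \in \operatorname{Newt} (P)$, which is exactly the setting in which the description of (relative) interior points recalled before the theorem applies. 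The engine of the proof is the congruence
\begin{equation*}
  P (\boldsymbol{x})^p \equiv P (\boldsymbol{x}^p) \pmod{p},
\end{equation*}
which follows from the multinomial theorem (every mixed multinomial coefficient is divisible by $p$) together with $c^p \equiv c \pmod{p}$ for each coefficient $c \in \mathbb{Z}_p$. Raising to the $n$-th power yields $P (\boldsymbol{x})^{pn} \equiv P (\boldsymbol{x}^p)^n \pmod{p}$, the same reduction used for $M = 1$ in \cite{hs-lucas-x}.

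Next I would expand the relevant constant term. Writing $P (\boldsymbol{x})^n = \sum_{\boldsymbol{w}} a_{\boldsymbol{w}} \boldsymbol{x}^{\boldsymbol{w}}$ and $P (\boldsymbol{x})^k = \sum_{\boldsymbol{u}} b_{\boldsymbol{u}} \boldsymbol{x}^{\boldsymbol{u}}$, so that $A (n) = a_{\boldsymbol{0}}$ and $A (k) = b_{\boldsymbol{0}}$, the substitution $\boldsymbol{x} \mapsto \boldsymbol{x}^p$ turns $P (\boldsymbol{x}^p)^n$ into $\sum_{\boldsymbol{w}} a_{\boldsymbol{w}} \boldsymbol{x}^{p \boldsymbol{w}}$. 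Since the constant term selects the monomials with $p \boldsymbol{w} + \boldsymbol{u} = \boldsymbol{0}$, this gives
\begin{equation*}
  A (pn + k) = \operatorname{ct} \bigl[ P (\boldsymbol{x})^{pn} P (\boldsymbol{x})^k \bigr] \equiv \operatorname{ct} \bigl[ P (\boldsymbol{x}^p)^n P (\boldsymbol{x})^k \bigr] = \sum_{\boldsymbol{w}} a_{\boldsymbol{w}}\, b_{- p \boldsymbol{w}} \pmod{p} .
\end{equation*}
The summand $\boldsymbol{w} = \boldsymbol{0}$ contributes $a_{\boldsymbol{0}} b_{\boldsymbol{0}} = A (n) A (k)$, so it remains to prove that every other summand vanishes, i.e.\ that $b_{- p \boldsymbol{w}} = 0$ whenever $\boldsymbol{w} \neq \boldsymbol{0}$.

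This is the geometric heart of the argument, and the step I expect to be the crux. Suppose $b_{- p \boldsymbol{w}} \neq 0$. Then $- p \boldsymbol{w} \in \operatorname{supp} (P^k) \subseteq \operatorname{Newt} (P^k) = k \operatorname{Newt} (P)$ by \eqref{eq:newt:prod} and convexity, so $- p \boldsymbol{w} = \sum_{\boldsymbol{v} \in \operatorname{supp} (P)} \lambda_{\boldsymbol{v}} \boldsymbol{v}$ with $\lambda_{\boldsymbol{v}} \geq 0$ and $\sum_{\boldsymbol{v}} \lambda_{\boldsymbol{v}} = k$. Dividing by $p$ exhibits the integer point $- \boldsymbol{w}$ as $\sum_{\boldsymbol{v}} (\lambda_{\boldsymbol{v}} / p) \boldsymbol{v}$ with total weight $k / p < 1 / M$. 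By the description of interior points recalled before the theorem, the strict inequality $k / p < 1 / M$ says precisely that $- \boldsymbol{w}$ is an integral (relative) interior point of $\frac{1}{M} \operatorname{Newt} (P)$. By hypothesis the only such point is the origin, forcing $- \boldsymbol{w} = \boldsymbol{0}$, a contradiction. Hence all off-diagonal terms vanish and $A (pn + k) \equiv A (n) A (k) \pmod{p}$.

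The Frobenius reduction and the bookkeeping of the constant term are routine; the delicate point is to get the strictness right. It is essential that $k / p$ be \emph{strictly} less than $1 / M$, since that is exactly what places $- \boldsymbol{w}$ in the relative interior (rather than merely on the boundary) of $\frac{1}{M} \operatorname{Newt} (P)$, where the hypothesis on $M$ has force. This is also the structural reason the congruence is only available in the partial range $k < p / M$.
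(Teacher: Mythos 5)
Your proof is correct and takes essentially the same approach as the paper's: the Frobenius congruence $P (\boldsymbol{x})^{p n} \equiv P (\boldsymbol{x}^p)^n \pmod{p}$, followed by the geometric argument that any point of $\operatorname{supp} (P^k)$ of the form $p \boldsymbol{v}$ must be the origin because $k / p < 1 / M$ places $\boldsymbol{v}$ in the relative interior of $\frac{1}{M} \operatorname{Newt} (P)$, with the degenerate case $\boldsymbol{0} \notin \operatorname{Newt} (P)$ handled separately. The only difference is notational: where you track coefficients explicitly in the sum $\sum_{\boldsymbol{w}} a_{\boldsymbol{w}} b_{- p \boldsymbol{w}}$, the paper packages the same bookkeeping via the Cartier operator $\Lambda_p$ and shows $\Lambda_p [P (\boldsymbol{x})^k] = \operatorname{ct} [P (\boldsymbol{x})^k]$.
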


\begin{proof}
  Let us denote with $\Lambda_p$ the Cartier operator
  \begin{equation*}
    \Lambda_p \left[ \sum_{\boldsymbol{k} \in \mathbb{Z}^d} a_{\boldsymbol{k}}
     \boldsymbol{x}^{\boldsymbol{k}} \right] = \sum_{\boldsymbol{k} \in
     \mathbb{Z}^d} a_{p\boldsymbol{k}} \boldsymbol{x}^{\boldsymbol{k}} .
  \end{equation*}
  Let $n, k \geq 0$. Using that $P (\boldsymbol{x})^{p n} \equiv P
  (\boldsymbol{x}^p)^n$ modulo $p$ and proceeding as in \cite{hs-lucas-x} and
  \cite{bsy-constantterms}, we have
  \begin{eqnarray}
    A (p n + k) & = & \operatorname{ct} [P (\boldsymbol{x})^{p n} P (\boldsymbol{x})^k]
    \nonumber\\
    & \equiv & \operatorname{ct} [P (\boldsymbol{x}^p)^n P (\boldsymbol{x})^k] \pmod{p} \nonumber\\
    & = & \operatorname{ct} [P (\boldsymbol{x})^n \Lambda_p [P (\boldsymbol{x})^k]] . 
    \label{eq:lucas:pnk:1}
  \end{eqnarray}
  Let $c\boldsymbol{x}^{\boldsymbol{v}}$, with $c \neq 0$, be a term of $\Lambda_p
  [P (\boldsymbol{x})^k]$. This means that $c\boldsymbol{x}^{p\boldsymbol{v}}$ is a
  term of $P (\boldsymbol{x})^k$ so that, in particular,
  \begin{equation*}
    p\boldsymbol{v}= \lambda_1 \boldsymbol{v}_1 + \lambda_2 \boldsymbol{v}_2 +
     \cdots + \lambda_t \boldsymbol{v}_t
  \end{equation*}
  where $\boldsymbol{v}_i \in \operatorname{supp} (P) \subseteq \operatorname{Newt} (P)$,
  $\lambda_i \in \mathbb{Z}_{\geq 0}$ and $\lambda_1 + \lambda_2 + \cdots
  + \lambda_t = k$. Accordingly,
  \begin{equation}
    \boldsymbol{v}= \mu_1 \boldsymbol{v}_1 + \mu_2 \boldsymbol{v}_2 + \cdots + \mu_t
    \boldsymbol{v}_t, \quad \mu_i = \frac{\lambda_i}{p}, \label{eq:v}
  \end{equation}
  where $\mu_i \geq 0$ and $\mu_1 + \cdots + \mu_t = k / p <
  \frac{1}{M}$. By Proposition~\ref{prop:ct:newt0}, we may assume that
  $\boldsymbol{0} \in \operatorname{Newt} (P)$ since the congruences
  \eqref{eq:ct:lucas:partial} are trivially true otherwise. We then observe
  that, since $\boldsymbol{0} \in \operatorname{Newt} (P)$, the (relative) interior
  points of $\frac{1}{M} \operatorname{Newt} (P)$ are those that can be expressed as
  $\sum_{\boldsymbol{v} \in \operatorname{supp} (P)} \lambda_{\boldsymbol{v}}
  \boldsymbol{v}$ with $\lambda_{\boldsymbol{v}} \geq 0$ and
  $\sum_{\boldsymbol{v} \in \operatorname{supp} (P)} \lambda_{\boldsymbol{v}} <
  \frac{1}{M}$ (compare with \eqref{eq:newt:def}). Accordingly, the point
  $\boldsymbol{v}$ in \eqref{eq:v} is an integral interior point of $\frac{1}{M}
  \operatorname{Newt} (P)$. Hence $\boldsymbol{v}=\boldsymbol{0}$ since, by assumption,
  there are no other integral interior points. We conclude that
  \begin{equation*}
    \Lambda_p [P (\boldsymbol{x})^k] = \operatorname{ct} [P (\boldsymbol{x})^k]
  \end{equation*}
  which, together with \eqref{eq:lucas:pnk:1}, implies that
  \begin{equation*}
    A (p n + k) \equiv \operatorname{ct} [P (\boldsymbol{x})^n \operatorname{ct} [P
     (\boldsymbol{x})^k]] = \operatorname{ct} [P (\boldsymbol{x})^n] \operatorname{ct} [P
     (\boldsymbol{x})^k] = A (n) A (k) \pmod{p},
  \end{equation*}
  thus showing \eqref{eq:ct:lucas:partial}.
\end{proof}

We record the following simple special case of
Theorem~\ref{thm:ct:lucas:partial} that is particularly convenient to apply.
However, as illustrated in Examples~\ref{eg:apery3} and \ref{eg:delannoy3}, it
does not capture the full strength of Theorem~\ref{thm:ct:lucas:partial}.
Here, we denote with $\deg (P)$ the largest exponent to which any variable
$x_i$ or its inverse $x_i^{- 1}$ appears in the Laurent polynomial $P
(\boldsymbol{x})$.

\begin{corollary}
  \label{cor:ct:lucas:partial}Let $P (\boldsymbol{x}) \in \mathbb{Z}_p
  [\boldsymbol{x}^{\pm 1}]$. Then the sequence defined by $A (n) = \operatorname{ct} [P
  (\boldsymbol{x})^n]$ satisfies
  \begin{equation*}
    A (p n + k) \equiv A (n) A (k) \pmod{p}
  \end{equation*}
  for integers $n, k \geq 0$ with $k < \frac{p}{\deg (P)}$.
\end{corollary}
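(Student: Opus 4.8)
The plan is to derive Corollary~\ref{cor:ct:lucas:partial} directly from Theorem~\ref{thm:ct:lucas:partial} by exhibiting a concrete choice of $M$ for which the geometric hypothesis on $\frac{1}{M}\operatorname{Newt}(P)$ is automatically satisfied. The natural candidate is $M = \deg(P)$, since this is precisely the quantity controlling how ``wide'' the Newton polytope can be in any coordinate direction.

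First I would observe that, by the definition of $\deg(P)$, every exponent vector $\boldsymbol{v} \in \operatorname{supp}(P)$ satisfies $|v_i| \leq \deg(P)$ for each coordinate $i$, so that $\operatorname{supp}(P) \subseteq [-\deg(P), \deg(P)]^d$. Since the Newton polytope is the convex hull of the support and the cube $[-\deg(P),\deg(P)]^d$ is itself convex, this gives the containment $\operatorname{Newt}(P) \subseteq [-\deg(P), \deg(P)]^d$, and hence, writing $M = \deg(P)$,
\begin{equation*}
  \tfrac{1}{M}\operatorname{Newt}(P) \subseteq [-1, 1]^d.
\end{equation*}
The next step is to argue that $[-1,1]^d$, and therefore the smaller set $\frac{1}{M}\operatorname{Newt}(P)$, contains no integral interior point besides the origin. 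Here I would use the characterization of interior points recalled just before the theorem: assuming $\boldsymbol{0}\in\operatorname{Newt}(P)$ (the case excluded by Proposition~\ref{prop:ct:newt0} being trivial), the interior points of $\frac{1}{M}\operatorname{Newt}(P)$ are those expressible with coefficient sum strictly less than $\frac{1}{M}=\frac{1}{\deg(P)}$. Any such point lies in the open cube $(-1,1)^d$, and the only integer lattice point in $(-1,1)^d$ is the origin.

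With the hypothesis of Theorem~\ref{thm:ct:lucas:partial} verified for $M = \deg(P)$, the conclusion $A(pn+k)\equiv A(n)A(k)\pmod p$ follows immediately for all $n,k\geq 0$ with $k < \frac{p}{\deg(P)}$, which is exactly the claimed range. I do not anticipate a genuine obstacle in this argument, as the corollary is a specialization rather than a strengthening; the only point requiring mild care is the distinction between the relative interior (intrinsic to the polytope, as used in the theorem) and the interior taken in the ambient cube. However, since the containment $\frac{1}{M}\operatorname{Newt}(P)\subseteq[-1,1]^d$ shows that every point of $\frac{1}{M}\operatorname{Newt}(P)$ already lies in $[-1,1]^d$, any relative interior point of $\frac{1}{M}\operatorname{Newt}(P)$ is in particular a point of that set lying in $(-1,1)^d$ under the coefficient-sum characterization, so the lattice-point count applies without needing to reconcile the two notions of interior. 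Thus the estimate on $\deg(P)$ cleanly feeds into the bound provided by the theorem.
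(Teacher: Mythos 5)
Your proposal is correct and matches the paper's proof, which simply invokes Theorem~\ref{thm:ct:lucas:partial} with $M=\deg(P)$ (offering \cite[Lemma~3.1]{bsy-constantterms} as an alternative); you additionally spell out the verification that $\tfrac{1}{\deg(P)}\operatorname{Newt}(P)$ has no nonzero integral relative interior point, which the paper leaves implicit. Your care with the coefficient-sum characterization (rather than naively passing to the ambient interior of the cube $[-1,1]^d$) and with the degenerate case $\boldsymbol{0}\notin\operatorname{Newt}(P)$ is exactly the right way to make that step rigorous.
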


\begin{proof}
  This follows directly from Theorem~\ref{thm:ct:lucas:partial} with $M = \deg
  (P)$. Alternatively, this can be concluded from
  \cite[Lemma~3.1]{bsy-constantterms} with $r = 1$ and $Q = 1$.
\end{proof}

\begin{example}
  \label{eg:apery3}The Ap\'ery numbers $A (n)$, defined in
  \eqref{eq:apery3}, have the constant term representation
  \cite[Remark~1.4]{s-apery}
  \begin{equation*}
    A (n) = \operatorname{ct} [P (\boldsymbol{x})^n], \quad P (\boldsymbol{x}) = \frac{(x
     + y) (z + 1) (x + y + z) (y + z + 1)}{x y z} .
  \end{equation*}
  Since the Newton polytope of $P (\boldsymbol{x})$ contains no integral
  interior point besides the origin, it immediately follows from
  Theorem~\ref{thm:ct:lucas:partial} (with $M = 1$) that the Ap\'ery numbers
  $A (n)$ satisfy the Lucas congruences for all primes, as proved directly
  from the binomial sum \eqref{eq:apery3} by Gessel \cite[Theorem
  1]{gessel-super}. On the other hand, note that
  Corollary~\ref{cor:ct:lucas:partial} only provides weaker partial Lucas
  congruences because $\deg (P) = 2$.
\end{example}

\begin{remark}
  Suppose that $P, Q \in \mathbb{Z}_p [\boldsymbol{x}^{\pm 1}]$. We briefly
  indicate that Theorem~\ref{thm:ct:lucas:partial} indirectly also applies to
  constant terms of the form $A (n) = \operatorname{ct} [P (\boldsymbol{x})^n Q
  (\boldsymbol{x})]$ when $Q \neq 1$. In that case it follows, for instance,
  from \cite[Lemma~3.1]{bsy-constantterms} with $r = 1$ that
  \begin{equation}
    A (p n + k) \equiv A (k) B (n) \pmod{p} \label{eq:lucasx}
  \end{equation}
  where $B (n) = \operatorname{ct} [P (\boldsymbol{x})^n]$ provided that $p > \deg (P^k
  Q)$. The condition on $p$ is satisfied if $p > k \deg (P) + \deg (Q)$ or,
  equivalently, $k < \frac{p - \deg (Q)}{\deg (P)}$. In light of
  \eqref{eq:lucasx}, by applying Theorem~\ref{thm:ct:lucas:partial} to the
  constant term $B (n)$, we can determine the values of $A (n)$ modulo $p$
  provided that the $p$-adic digits of $n$ are suitably restricted (to satisfy
  the conditions needed for \eqref{eq:lucasx} as well the subsequent
  applications of Theorem~\ref{thm:ct:lucas:partial}).
\end{remark}

It is natural to wonder about a possible converse statement of
Theorem~\ref{thm:ct:lucas:partial}. For instance, if a constant term sequence
$(A (n))_{n \geq 0}$ satisfies partial Lucas congruences, does there
always exist a, potentially alternative, representation $A (n) = \operatorname{ct} [P
(\boldsymbol{x})^n]$ such that these congruences can be concluded from
Theorem~\ref{thm:ct:lucas:partial}? In other words:

\begin{question}
  \label{q:ct:lucas}Suppose that $A (n) = \operatorname{ct} [P_0 (\boldsymbol{x})^n Q_0
  (\boldsymbol{x})]$ with $P_0, Q_0 \in \mathbb{Z} [\boldsymbol{x}^{\pm 1}]$ is a
  constant term that satisfies $A (0) = 1$ as well the Lucas congruences
  \eqref{eq:ct:lucas:partial} for integers $n, k \geq 0$ with $k <
  \frac{p}{M}$. Does $A (n)$ necessarily have a representation $A (n) =
  \operatorname{ct} [P (\boldsymbol{x})^n]$ for some $P (\boldsymbol{x}) \in \mathbb{Z}
  [\boldsymbol{x}^{\pm 1}]$ with $\frac{1}{M} \operatorname{Newt} (P)$ containing no
  integral interior point besides the origin? If not, is this true under an
  additional natural condition?
\end{question}

We make two comments on the statement of this question before illustrating the
question and its implications in two special instances. First, we note that
the condition $A (0) = 1$ should be seen as part of satisfying the Lucas
congruences: indeed, the Lucas congruences in the alternative form
\eqref{eq:lucas:u} naturally imply $A (0) = 1$ if we apply them for $n = 0$
with an empty $p$-adic expansion so that the right-hand side of
\eqref{eq:lucas:u} is the empty product with value $1$. Moreover, the
condition $A (0) = 1$ is needed to avoid the case of the zero sequence which
can be represented as a constant term with $P = 1$ and $Q = x$ and which
trivially satisfies the congruences \eqref{eq:ct:lucas:partial} (but, clearly,
cannot have a representation of the form $A (n) = \operatorname{ct} [P
(\boldsymbol{x})^n]$).

Second, we observe that there is no loss of generality in reducing to the case
$Q_0 (\boldsymbol{x}) = 1$ in Question~\ref{q:ct:lucas}. Indeed, if $A (n)$
satisfies the Lucas congruences \eqref{eq:ct:lucas:partial} for integers $n, k
\geq 0$ with $k < \frac{p}{M}$ then, in particular,
\begin{equation*}
  A (p n) \equiv A (n) \pmod{p} .
\end{equation*}
In \cite[Proposition~5.1]{bsy-constantterms} it is shown that, for constant
terms, these congruences imply, for large enough $p$, the stronger congruences
\begin{equation}
  A (p^r n) \equiv A (p^{r - 1} n) \pmod{p^r} \label{eq:gauss}
\end{equation}
(known in the literature as Gauss congruences) and that, in fact, $A (n) = A
(0) \operatorname{ct} [P_0 (\boldsymbol{x})^n] = \operatorname{ct} [P_0 (\boldsymbol{x})^n]$ (that
is, $Q_0 (\boldsymbol{x})$ can be replaced by $1$ in the constant term
representation that we started with).

The following example illustrates Question~\ref{q:ct:lucas} for a specific
sequence. In that particular case, the answer to Question~\ref{q:ct:lucas} is
affirmative.

\begin{example}
  \label{eg:delannoy3}Consider the generalized Delannoy sequence
  \cite[\tmverbatim{A081798}]{oeis}
  \begin{equation*}
    C (n) = \sum_{k = 0}^n \binom{n}{k} \binom{n + k}{k} \binom{n + 2 k}{k},
  \end{equation*}
  which starts with the values $1, 7, 115, 2371, 54091, \ldots$ and counts
  lattice walks from the origin to $(n, n, n)$ using steps $(1, 0, 0)$, $(0,
  1, 0)$, $(0, 0, 1)$ and $(1, 1, 1)$.
  
  We can proceed as in Example~\ref{eg:u} to express $C (n)$ as the constant
  term
  \begin{equation*}
    C (n) = \operatorname{ct} \left[ \sum_{k = 0}^n \binom{n}{k} \frac{(1 + x)^{n +
     k}}{x^k} \frac{(1 + y)^{n + 2 k}}{y^k} \right] = \operatorname{ct} [P (x, y)^n]
  \end{equation*}
  with
  \begin{equation*}
    P (x, y) = (1 + x) (1 + y) \left(1 + \frac{1 + x}{x}  \frac{(1 +
     y)^2}{y} \right) .
  \end{equation*}
  The Laurent polynomial $P$ has support $\{ - 1, 0, 1 \} \times \{ - 1, 0, 1,
  2 \}$. In particular, its Newton polytope is the rectangle $[- 1, 1] \times
  [- 1, 2]$ which has the interior integral points $(0, 0)$ and $(0, 1)$.
  Hence, Theorem~\ref{thm:ct:lucas:partial:intro} can be applied with $M = 2$
  to deduce partial Lucas congruences.
  
  However, it follows from a result of McIntosh
  \cite[Theorem~6]{mcintosh-lucas} that the sequence $(C (n))_{n \geq
  0}$ satisfies the full Lucas congruences for all primes. We note that this
  can also be concluded from expressing $C (n)$ as the diagonal of the
  rational function $1 / (1 - x - y - z - x y z)$, which naturally encodes the
  lattice walk count, and applying a general result of Rowland and Yassawi
  \cite[Theorem~5.2]{ry-diag13}. It is therefore a natural special case of
  Question~\ref{q:ct:lucas} to ask whether the sequence $(C (n))_{n \geq
  0}$ has an alternative constant term representation such that
  Theorem~\ref{thm:ct:lucas:partial:intro} can be applied with $M = 1$ to
  deduce the full Lucas congruences. In this case, the answer is affirmative.
  Indeed, we can also write
  \begin{equation*}
    C (n) = \sum_{k = 0}^n \binom{n}{k} \binom{n + 2 k}{k, k, n} = \operatorname{ct}
     \left[ \sum_{k = 0}^n \binom{n}{k} \frac{(1 + x + y)^{n + 2 k}}{x^k y^k}
     \right] = \operatorname{ct} [\tilde{P} (x, y)^n]
  \end{equation*}
  with
  \begin{equation*}
    \tilde{P} (x, y) = (1 + x + y) \left(1 + \frac{(1 + x + y)^2}{x y}
     \right) .
  \end{equation*}
  The Newton polytope of $\tilde{P}$ is the triangle with vertices $(- 1, -
  1)$, $(- 1, 2)$, $(2, - 1)$. Its only interior integral point is $(0, 0)$
  and so Theorem~\ref{thm:ct:lucas:partial:intro} applies with $M = 1$ and we
  are able to conclude the full Lucas congruences.
\end{example}

\begin{example}
  The following is a specific instance for which it would be of particular
  interest to know the answer to Question~\ref{q:ct:lucas}. Consider the
  constant term sequence $(A (n))_{n \geq 0}$ defined by $A (n) =
  \operatorname{ct} [P^n]$ with the symmetric Laurent polynomial
  \begin{equation*}
    P = \frac{(zx + xy - yz - x - 1)  (xy + yz - zx - y - 1)  (yz + zx - xy -
     z - 1)}{xyz} .
  \end{equation*}
  This representation was recently discovered by Gorodetsky
  \cite{gorodetsky-ct} through a clever computational search. Gorodetsky
  succeeded in finding such constant term representations for all known
  \emph{sporadic} Ap\'ery-like sequences, the present sequence $(A (n))_{n
  \geq 0}$ typically being labeled $(\eta)$ in the literature
  \cite{asz-clausen}. For all sporadic Ap\'ery-like sequences besides
  $(\eta)$, these constant term representations are such that the Newton
  polytope of the underlying Laurent polynomial only has the origin as an
  interior integral point, thus implying (as in Example~\ref{eg:apery3} for
  the Ap\'ery numbers) that the sequence satisfies the Lucas congruences.
  The Newton polytope of the above Laurent polynomial $P$, however, contains
  the lattice points $(1, 0, 0)$, $(1, 1, 0)$ and their permutations as
  interior points so that the Lucas congruences cannot be directly concluded
  (though Theorem~\ref{thm:ct:lucas:partial} can be applied with $M = 2$ to
  imply Lucas congruences if the digits are less than $p / 2$). Nevertheless,
  Malik and the author \cite{ms-lucascongruences} had previously shown that
  all known sporadic Ap\'ery-like sequences satisfy Lucas congruences (with
  the case $(\eta)$ requiring an ad-hoc and technical argument). It is natural
  to wonder whether the sequence $(\eta)$ has a, yet-to-be-discovered,
  constant term representation $A (n) = \operatorname{ct} [\tilde{P}
  (\boldsymbol{x})^n]$ such that the Newton polytope of $\tilde{P}$ only has the
  origin as an interior integral point. An affirmative answer to
  Question~\ref{q:ct:lucas} would imply that such a representation exists. In
  any case, this example and its history underscore the difficulty in finding
  constant term representations for a given sequence. On the other hand, as
  pointed out in the introduction, constant term representations, once found,
  can be algorithmically proven (at least in principle) using, for instance,
  creative telescoping \cite{koutschan-phd}.
\end{example}

\section{More general congruences}

In addition to establishing Theorem~\ref{thm:u:lucas}, the partial Lucas
congruences for the sequence $(u (n))_{n \geq 0}$, Chamberland and
Dilcher \cite{cd-binomial09} further prove the following congruences,
covering the $p$-adic digits that are excluded in the partial Lucas
congruences.

\begin{theorem}[{\cite[Corollary~2.2]{cd-binomial09}}]
  \label{thm:u:lucas:x}Let
  \begin{equation}
    w (n) = \sum_{k = 0}^{n - 1} (- 1)^k \binom{2 n - 1}{k} \binom{n - 1}{k} .
    \label{eq:w:def}
  \end{equation}
  Then
  \begin{equation}
    u (p n + k) \equiv w (n + 1) u (k) \pmod{p}
    \label{eq:u:lucas:x}
  \end{equation}
  for all integers $n, k \geq 1$ with $(p + 1) / 2 \leq k < p$.
\end{theorem}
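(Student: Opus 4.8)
The plan is to deduce Theorem~\ref{thm:u:lucas:x} from the constant term representation of Example~\ref{eg:u} by the same Cartier operator manipulation that underlies Theorem~\ref{thm:ct:lucas:partial}, but now tracking the one extra $p$-adic digit that survives in the complementary range $(p+1)/2 \le k < p$. Writing $P(x) = (1+x)(x - 1/x) = (1+x)^2(1 - 1/x)$, so that $u(n) = \operatorname{ct}[P(x)^n]$, the argument in the proof of Theorem~\ref{thm:ct:lucas:partial} gives $u(pn+k) \equiv \operatorname{ct}[P(x)^n \, \Lambda_p[P(x)^k]] \pmod{p}$ for all $n, k \ge 0$. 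The remaining task is therefore to compute $\Lambda_p[P(x)^k]$ for $k$ in the stated range, where it is no longer reduced to its constant term alone.

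First I would pin down the shape of $\Lambda_p[P(x)^k]$. Since $\operatorname{Newt}(P^k) = [-k, 2k]$ and $(p+1)/2 \le k < p$, the only integer multiples of $p$ lying in $[-k,2k]$ are $0$ and $p$; hence only the exponents $v = 0$ and $v = 1$ survive the Cartier operator and $\Lambda_p[P(x)^k] = u(k) + c_k\, x$, where $c_k$ denotes the coefficient of $x^p$ in $P(x)^k$. The crux of the proof is then the claim that $c_k \equiv u(k) \pmod{p}$: once this is known, $\Lambda_p[P(x)^k] \equiv u(k)(1+x)$ and therefore $u(pn+k) \equiv u(k)\operatorname{ct}[(1+x)P(x)^n] \pmod{p}$.

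To establish this crux I would expand $P(x)^k = (1+x)^{2k}(1 - 1/x)^k$, which yields $u(k) = \sum_j (-1)^j \binom{k}{j}\binom{2k}{j}$ and, reading off the coefficient of $x^p$, $c_k = \sum_j (-1)^j \binom{k}{j}\binom{2k}{p+j}$. The hypothesis $(p+1)/2 \le k < p$ forces $1 \le 2k - p < p$, so that $2k = (2k-p) + p$ has $p$-adic digits $2k-p$ and $1$; Lucas' congruences \eqref{eq:binom:lucas} then give $\binom{2k}{p+j} \equiv \binom{2k-p}{j} \equiv \binom{2k}{j} \pmod{p}$ for every $j$ with $0 \le j \le k < p$. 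Summing over $j$ yields $c_k \equiv u(k)$. This single application of the classical Lucas theorem is the main obstacle, and it is precisely where the digit restriction on $k$ enters; everything else is the now-standard constant term reduction together with elementary coefficient extractions.

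It remains to identify $\operatorname{ct}[(1+x)P(x)^n]$. Here $(1+x)P(x)^n = (1+x)^{2n+1}(1 - 1/x)^n$, so that $\operatorname{ct}[(1+x)P(x)^n] = \sum_j (-1)^j \binom{n}{j}\binom{2n+1}{j}$, which is exactly $w(n+1)$ by the definition \eqref{eq:w:def}. Combining this with the congruence from the previous paragraph gives $u(pn+k) \equiv w(n+1)\,u(k) \pmod{p}$, as claimed (the restriction $n, k \ge 1$ in the statement is inessential here, since the derivation goes through for all $n \ge 0$). I expect the only delicate point to be the bookkeeping of base-$p$ digits in the crux; the surrounding steps are routine.
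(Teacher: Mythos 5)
Your proposal is correct, and its overall architecture matches the paper's proof exactly: the constant term representation $u(n) = \operatorname{ct}[P(x)^n]$ with $P(x) = (1+x)^2(1-1/x)$, the Cartier reduction $u(pn+k) \equiv \operatorname{ct}[P(x)^n \Lambda_p[P(x)^k]] \pmod{p}$, the observation that only the exponents $0$ and $p$ survive $\Lambda_p$ in the range $(p+1)/2 \le k < p$, the crux congruence $[x^p]P(x)^k \equiv u(k) \pmod{p}$, and the identification $\operatorname{ct}[(1+x)P(x)^n] = w(n+1)$. Where you genuinely diverge is in the proof of the crux. The paper stays entirely inside the constant-term formalism: it rewrites the claim as $\operatorname{ct}[(1-x^{-p})P(x)^k] \equiv 0 \pmod{p}$, splits off a factor $(1+x)^p \equiv 1+x^p$, and kills the resulting term $\bigl(x^p - x^{-p}\bigr)(1+x)^{2k-p}(1-1/x)^k$ by a degree count (this is where $k \ge (p+1)/2$ enters, making $(1+x)^{2k-p}$ a genuine polynomial of degree $2k-p < p$). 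You instead extract both $u(k)$ and $c_k = [x^p]P(x)^k$ as explicit binomial sums and compare them term by term via the classical Lucas congruences \eqref{eq:binom:lucas}, using that $2k$ has base-$p$ digits $(2k-p, 1)$; your digit bookkeeping is accurate, and the argument is sound. The trade-off: your route is computationally transparent and arguably more elementary, but it reintroduces the classical Lucas theorem for binomial coefficients---closer in spirit to Chamberland--Dilcher's original proof, and tied to having an explicit univariate binomial expansion. The paper's degree argument, by contrast, never leaves the Laurent-polynomial setting, which is what makes it "natural" in the paper's sense and more readily imitated for other (including multivariate) constant term sequences where clean closed-form coefficient sums may not exist.
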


In \cite{cd-binomial09}, this result is obtained by working directly with
the binomial sum \eqref{eq:u:def} defining the numbers $u (n)$. In this
section, we give an alternative proof of Theorem~\ref{thm:u:lucas:x} using the
constant term representation \eqref{eq:u:ct}. This proof has the benefit of
being natural in the sense that similar computations can be performed for
other constant term sequences, though the present case offers some additional
and nongeneric simplifications. In particular, our proof makes the
relationship between the sequences $(u (n))_{n \geq 0}$ and $(w (n))_{n
\geq 0}$ transparent from the point of view of constant terms (compare
\eqref{eq:u:ct} and \eqref{eq:w:ct}). Further examples where similar
congruences are explicitly worked out for a class of constant term sequences
can be found in \cite{hs-lucas-x}.

\begin{proof}
  Recall from \eqref{eq:u:ct} that
  \begin{equation*}
    u (n) = \operatorname{ct} [P (x)^n], \quad P (x) = (1 + x)^2 \left(1 -
     \frac{1}{x} \right) .
  \end{equation*}
  We therefore have, as in \eqref{eq:lucas:pnk:1}, that, modulo $p$,
  \begin{eqnarray*}
    u (p n + k) & = & \operatorname{ct} [P (x)^{p n} P (x)^k]\\
    & \equiv & \operatorname{ct} [P (x^p)^n P (x)^k] \pmod{p}\\
    & = & \operatorname{ct} [P (x)^n \Lambda_p [P (x)^k]] .
  \end{eqnarray*}
  If $0 \leq k < p / 2$, then $\Lambda_p [P (x)^k] = \operatorname{ct} [P (x)^k]
  = u (k)$ and we obtain the partial Lucas congruences $u (p n + k) \equiv u
  (n) u (k)$ that were obtained more generally in
  Theorem~\ref{thm:ct:lucas:partial}. Here, we assume that $(p + 1) / 2
  \leq k < p$, in which case we find
  \begin{equation}
    \Lambda_p [P (x)^k] = \operatorname{ct} [P (x)^k] + x [x^p] P (x)^k,
    \label{eq:u:lucas:x:Gp}
  \end{equation}
  where $[x^m] f (x)$ denotes the coefficient of $x^m$ in the power series $f
  (x)$. Here, we used the fact that the largest exponent of $x$ among all
  terms in $P (x)^k$ is $2 k < 2 p$ while the smallest exponent is $- k > -
  p$.
  
  In general, we now need to investigate the numbers $c_p (k) = [x^p] P
  (x)^k$. In the present case, however, we have the remarkable simplification
  \begin{equation}
    [x^p] P (x)^k \equiv \operatorname{ct} [P (x)^k] \pmod{p}
    \label{eq:u:lucas:x:xp}
  \end{equation}
  for $(p + 1) / 2 \leq k < p$, which we will prove below. Assuming for
  now the truth of \eqref{eq:u:lucas:x:xp} and combining it with
  \eqref{eq:u:lucas:x:Gp}, we obtain that
  \begin{equation*}
    \Lambda_p [P (x)^k] \equiv (1 + x) \operatorname{ct} [P (x)^k] = (1 + x) u (k)
     \pmod{p} .
  \end{equation*}
  In particular, we conclude that
  \begin{equation*}
    u (p n + k) \equiv \operatorname{ct} [P (x)^n \Lambda_p [P (x)^k]] \equiv
     \operatorname{ct} [P (x)^n (1 + x)] \cdot u (k) = v (n) u (k) \pmod{p},
  \end{equation*}
  which proves generalized Lucas congruences involving the auxiliary sequence
  defined by
  \begin{equation*}
    v (n) = \operatorname{ct} [P (x)^n (1 + x)] .
  \end{equation*}
  Indeed, this constant term sequence is precisely the sequence $(w (n +
  1))_{n \geq 0}$ introduced by Chamberland and Dilcher as the binomial
  sum \eqref{eq:w:def}. This can be seen from the following computation
  \begin{eqnarray}
    w (n + 1) & = & \sum_{k = 0}^n (- 1)^k \binom{2 n + 1}{k} \binom{n}{k}
    \nonumber\\
    & = & \operatorname{ct} \left[ \sum_{k = 0}^n (- 1)^k \binom{n}{k} \frac{(1 +
    x)^{2 n + 1}}{x^k} \right] \nonumber\\
    & = & \operatorname{ct} \left[ (1 + x)^{2 n + 1} \left(1 - \frac{1}{x} \right)^n
    \right] = \operatorname{ct} [P (x)^n (1 + x)],  \label{eq:w:ct}
  \end{eqnarray}
  which is along the same lines as the earlier derivation of \eqref{eq:u:ct}.
  
  It remains to show that \eqref{eq:u:lucas:x:xp} is true for $(p + 1) / 2
  \leq k < p$. Since $[x^p] P (x)^k = \operatorname{ct} [x^{- p} P (x)^k]$, the
  congruence \eqref{eq:u:lucas:x:xp} is equivalent to
  \begin{equation}
    \operatorname{ct} [(1 - x^{- p}) P (x)^k] \equiv 0 \pmod{p} .
    \label{eq:u:lucas:x:xp:diff}
  \end{equation}
  Since $P (x) = (1 + x)^2 \left(1 - \frac{1}{x} \right)$, we have
  \begin{equation*}
    \operatorname{ct} [(1 - x^{- p}) P (x)^k] = \operatorname{ct} \left[ \left(1 -
     \frac{1}{x^p} \right) (1 + x)^p (1 + x)^{2 k - p} \left(1 - \frac{1}{x}
     \right)^k \right] .
  \end{equation*}
  Using $(1 + x)^p \equiv (1 + x^p)$ modulo $p$ and $(1 - x^{- p}) (1 + x^p) =
  (x^p - x^{- p})$, we conclude
  \begin{equation*}
    \operatorname{ct} [(1 - x^{- p}) P (x)^k] \equiv \operatorname{ct} \left[ \left(x^p -
     \frac{1}{x^p} \right) (1 + x)^{2 k - p} \left(1 - \frac{1}{x} \right)^k
     \right] \pmod{p} .
  \end{equation*}
  The right-hand side is now seen to be $0$ because $(1 + x)^{2 k - p}$ is a
  polynomial (here, we use that $k \geq (p + 1) / 2$) in $x$ of degree at
  most $2 k - p < p$ while $(1 - x^{- 1})^k$ is a polynomial in $x^{- 1}$ of
  degree at most $k < p$ (and so the product does not have a term that is a
  multiple of either $x^p$ or $x^{- p}$). This proves
  \eqref{eq:u:lucas:x:xp:diff}.
\end{proof}

\section{Conclusions}

We have shown that all sequences representable as the constant terms of the
powers of a multivariate Laurent polynomial satisfy Lucas congruences modulo
all primes with a restriction on the allowed digits. In a related but somewhat
different direction, Rowland \cite{rowland-lucas-p2} considers the classical
Lucas congruences \eqref{eq:binom:lucas} for the binomial coefficients and
characterizes those digits $(r, s)$ in base $p$ such that the congruences
\begin{equation*}
  \binom{p n + r}{p k + s} \equiv \binom{n}{k} \binom{r}{s} \pmod{p^2}
\end{equation*}
hold for all integers $n, k \geq 0$. Rowland, Yassawi and Krattenthaler
\cite{ryk-lucas-p2} further consider the question of partial Lucas
congruences modulo $p^2$ for the Ap\'ery numbers \eqref{eq:apery3}. In
general, such congruences modulo higher powers of $p$ are more isolated than
the general congruences in Theorem~\ref{thm:ct:lucas:partial:intro}.

A motivation of the works of Chamberland and Dilcher
\cite{cd-binomial06,cd-binomial09} was the fact that the sequences $(u_{a,
b}^{\varepsilon} (n))_{n \geq 0}$ defined in \eqref{eq:uabe:def} are
connected to Wolstenholme's theorem. Indeed, they show that
\begin{equation*}
  u_{a, b}^{\varepsilon} (p) \equiv 1 + (- 1)^{\varepsilon} 2^b \pmod{p^3}
\end{equation*}
for any prime $p \geq 5$, except when $(\varepsilon, a, b) = (0, 0, 1)$
or $(0, 1, 0)$. It remains an open problem to understand the composite numbers
$n$ satisfying this congruence as well. Andrews \cite{andrews-qcong99}
obtained a $q$-analog of Wolstenholme's congruences for the binomial
coefficients. In this spirit, it could be of interest to develop a $q$-version
of the present results.

The sequences $(u (n))_{n \geq 0}$ and $(u_{a, b}^{\varepsilon} (n))_{n
\geq 0}$ studied by Chamberland and Dilcher \cite{cd-binomial09} were
expressed as constant terms in \eqref{eq:u:ct} and \eqref{eq:uabe:ct}, and
these representations were deduced in a systematic manner from the defining
binomial sums. In general, however, it is an open problem to determine whether
a given integer sequence can be represented as a constant term $\operatorname{ct} [P
(\boldsymbol{x})^n Q (\boldsymbol{x})]$ for Laurent polynomials $P (\boldsymbol{x}),
Q (\boldsymbol{x}) \in \mathbb{Z} [\boldsymbol{x}^{\pm 1}]$. This question was
raised by Zagier \cite[p.~769, Question~2]{zagier-de} and Gorodetsky
\cite{gorodetsky-ct} in the case $Q = 1$ and appears in \cite{s-schemes}
for general $Q$. A classification in the $C$-finite case was completed in
\cite{bsy-constantterms} but the general case remains open, including the
special cases of hypergeometric sequences or those with an algebraic
generating function (or, even, the intersection of those).

\end{document}